\theoremstyle{plain}
\newtheorem{theorem}{Theorem}[section]
\newtheorem*{theorem*}{Theorem}
\newtheorem{remark}{Remark}
\newtheorem{prop}[theorem]{Proposition}
\newtheorem*{mt*}{Main Theorem}
\newcommand\C{{\mathbb C}}
\newcommand\g{{\mathfrak{g}}}
\newcommand\h{{\mathfrak{h}}}
\newcommand\su{{\mathfrak{su}}}
\newcommand\m{{\mathfrak{m}}}
\renewcommand\t{{\theta}}
\newcommand\s{{\mathbb S}}
\newcommand{\D}{\mathcal D}
\newcommand{\A}{\mathcal A}
\numberwithin{equation}{section}
\begin{document}
\begin{center}
{\bf\large Some geometrical properties of Berger Spheres}
 \\[0.5cm]
{Y. AryaNejad \footnote{ Corresponding author: y.aryanejad@pnu.ac.ir\\ \qquad Received: xx Month 201x\\ \qquad  Revised: xx Month 201x \\ \qquad Accepted: xx Month 201x}\\ 
Department of Mathematics, Payame noor University, P.O. Box 19395-3697, Tehran, Iran.} \\[2mm]

\end{center}%
\vspace*{0.5cm}
%
\begin{quotation}
\noindent
{\footnotesize {\sc Abstract.}
Our aim in this paper is to investigate some geometrical properties of Berger Spheres i.e. homogeneous
Ricci solitons and harmonicity properties of invariant vector fields. We determine all vector fields which are critical points for the energy functional
restricted to vector fields. We also see that do not exist any vector fields defining harmonic map, and the energy of critical points is explicitly calculated.\\

{ Keywords:} Berger spheres, Ricci solitons, Harmonic vector fields, Harmonic maps.\\

\noindent
\textit{2000 Mathematics subject classification: } 53C50, 53C15; Secondary 53C25.}
\end{quotation}
\markboth {Y. AryaNejad}{Some geometrical properties of Berger Spheres}
\section{ Introduction}
\noindent 
Suppose $\lbrace P,X_1 ... X_m,Y_1 ... Y_m,Q \rbrace$ is a basis for Lie
In Riemannian geometry, a Berger sphere, is a standard 3-sphere with Riemannian metric from a one-parameter family, which can be obtained from the standard metric by shrinking along fibers of a Hopf fibration. 
 These spaces, found by M. Berger \cite{B1}  in his classification of all simply
connected normal homogeneous Riemannian manifolds of positive sectional curvature,
have not constant curvature, and their metrics are obtained from the round metric on
$S^3$ by deforming it along the fibers of the Hopf fibration $S^3\rightarrow S^2$ by $\epsilon$.
In \cite{g1} the homogeneous Riemannian structures on the 3-dimensional Berger spheres,
their corresponding reductive decompositions and the associated groups of isometries
have been obtained.
These spaces are of great interest in Riemannian geometry and provide nice examples; for instance,
they served as counterexamples to a conjecture of Klingenberg about closed geodesics \cite{p11} and to conjectures on the first eigenvalue of the Laplacian on spheres
(\cite{b1}, \cite{u1}).
The Berger spheres $S_{\epsilon}^3$
are homogeneous Riemannian spaces diffeomorphic to the 3-
dimensional sphere.
The metrics so constructed are known as Berger
metrics, they consist in a 1-parameter variation $g_{\epsilon}$ for $\epsilon >0$. We
will consider also the Lorentzian Berger metrics, i.e. when $\epsilon <0$.
 Up to our knowledge, no geometrical properties such as homogeneous
Ricci solitons have been obtained yet for Berger Spheres.\\
 A natural generalization of an Einstein manifold is Ricci soliton, i.e.   
a pseudo Riemannian metric $g$ on a smooth manifold $M$, such that the equation
\begin{eqnarray}\label{ric}
\begin{array}{cccc}
\mathcal{L}_{X} g =\varsigma g-\varrho,
\end{array}
\end{eqnarray}
holds for some $\varsigma \in R$ and some smooth vector field $X$ on $M$, where $\varrho$
denotes the Ricci tensor of $(M, g)$ and $\mathcal{L}_{X}$ is the usual Lie derivative.
 According
to whether $\varsigma > 0, \varsigma = 0$ or $\varsigma < 0$, a Ricci soliton $g$ is said to be shrinking, steady or expanding, respectively. A homogeneous Ricci soliton on a homogeneous space $M = G/H$ is a G-invariant metric $g$ for which the equation \eqref{ric} holds and
an invariant Ricci soliton is a homogeneous apace, such that equation \eqref{ric} is satisfied by an invariant vector field.
Indeed, the study of Ricci solitons homogeneous spaces is an interesting area
of research in pseudo-Riemannian geometry.
For example, evolution of homogeneous Ricci solitons under the bracket flow \cite{Lm}, algebraic solitons and the Alekseevskii Conjecture properties\cite{LM}, conformally flat Lorentzian gradient Ricci
solitons\cite{MB}, properties of algebraic Ricci Solitons of three-dimensional Lorentzian
Lie groups\cite{BA}, algebraic Ricci solitons \cite{Ba}. Non-K\"{a}hler examples of Ricci solitons are very hard to find yet (see \cite{DH}).
In case $(G, g)$ be a simply-connected completely solvable Lie
group equipped with a left-invariant metric, and $(\g,\langle,\rangle )$ be the corresponding
metric Lie algebra, then $(G, g)$ is a Ricci soliton if and only if $(\g,\langle ,\rangle)$ is an
algebraic Ricci soliton \cite{LJ}. \\
On the other hand, investigating critical points of the energy associated to vector fields is an interesting purpose
under different points of view. As an example by the
Reeb vector field $\xi$ of a contact metric manifold, somebody can see how the criticality of such a vector field is related to the geometry of the manifold (\cite{p1},\cite{p2}).
Recently, it has been \cite{g3} proved that critical points of $E:\mathfrak{X}(M)\rightarrow R$, that is,
the energy functional restricted to vector fields, are again parallel vector fields. Moreover,
in the same paper it also has been determined the tension field associated to a unit vector field $V$, and investigated the problem of determining when $V$ defines a harmonic map. \\
A Riemannian manifold admitting a parallel vector field is locally reducible, and the same is true for a
pseudo-Riemannian manifold admitting an either space-like or time-like parallel vector field. This leads us to consider different situations, where some interesting types of non-parallel vector fields can be characterized in terms of harmonicity properties. We may refer
to the recent monograph \cite{d11} and some references \cite{i}, \cite{n} for an overview on harmonic vector fields.\\
This paper is organized in the following way. We devote section 2 to recall the definitions and to state the results we will need in the sequel.
In Section 3, we investigate rquired conditions for Berger Spheres Ricci solitons. Harmonicity properties
of vector fields on Berger Spheres will be determined
in Sections 4. Finally, the energy of all these vector fields is explicitly calculated in Section 5.
\section{preliminaries}
Let $M = G/H$ be a homogeneous manifold (with $H$ connected), $\g$ the Lie algebra
 of $G$ and $\h$ the isotropy subalgebra. Consider $\m =\g /\h$ the factor space, which identies
with a subspace of $\g$ complementary to $\h$. The pair $(\g,\h)$ uniquely defines the isotropy
representation
\begin{center}
$\psi :\g \longrightarrow \mathfrak{gl}(\m),\quad      \psi(x)(y)=[x,y]_\m$   
\end{center}
for all  $x\in \g, y\in \m$. Suppose that $\lbrace e_1,...,e_r,u_1,...,u_n\rbrace$ be a basis of $\g$, where $\lbrace e_j\rbrace$ and $\lbrace u_i\rbrace$ are bases of $\h$ and $\m$
respectively, then with respect to $\lbrace u_i \rbrace$, $H_j$ whould be the isotropy representation  for $e_j$.
$g$ on $\m$ uniquely defines its invariant linear Levi-Civita connection, as the corresponding homomorphism of $\h$-modules  $\Lambda:\g \longrightarrow \mathfrak{gl}(\m)$ such that $\Lambda(x)(y_\m)=[x,y]_\m$ for all  $x\in \h, y\in \g$. In other word
\begin{eqnarray}\label{con}
\begin{array}{cccc}
\Lambda(x)(y_\m)=\frac{1}{2}[x,y]_\m+v(x,y)
\end{array}
\end{eqnarray}
for all  $x,y\in \g$, where $v:\g \times \g\rightarrow \m$ is the $\h$-invariant symmetric mapping uniquely determined by
\begin{center}
$2g(v(x, y), z_\m) = g(x_\m, [z,y]_\m) + g(y_\m,[z,x]_\m)$
\end{center}
for all  $x,y,z\in \g$,
Then the curvature tensor can be determined by
\begin{eqnarray}\label{cur}
R:\m \times \m \longrightarrow \mathfrak{gl}(\m),\quad & R(x,y)=[\Lambda(x),\Lambda(y)]-\Lambda ([x,y]),   
\end{eqnarray}
and with respect to $u_i$, the Ricci tensor $\rho$ of $g$  is given by
\begin{eqnarray}\label{ric2}
\rho (u_i ,u_j)=\sum_{k=1}^4g(R(u_k,u_i)u_j,u_k),\quad i,j=1,\dots,4.
\end{eqnarray}
Let $(M,g)$ be a compact Riemannian manifold and $g_s$ be the Sasaki metric
on the tangent bundle $TM$, then the energy of a smooth vector field $V:(M,g)\longrightarrow
(TM,g^s)$ on $M$ is;
\begin{equation}\label{enr}
E(V)=\dfrac{n}{2}vol (M,g)+\dfrac{1}{2}\int_M ||\nabla V||^2dv
\end{equation}
(assuming M compact; in the non-compact case, one works over relatively compact domains see \cite{c1}). If $V:(M,g)\longrightarrow (TM,g^s)$ be a critical point for the energy functional, then $V$ is said to define a harmonic map. The Euler-Lagrange equations characterize vector fields $V$ defining harmonic maps as the ones whose tension field $\t(V)=tr(\nabla^2V)$ vanishes.
 Consequently,
$V$ defines a harmonic map from $(M,g)$ to $(TM,g^s)$ if and only if
\begin{equation}\label{hor}
 tr[R(\nabla_. V,V)_.]=0, \quad  \nabla^*\nabla V=0,
\end{equation}
where with respect to an orthonormal local frame $\lbrace e_1,...,e_n\rbrace$ on $(M,g)$, with $\varepsilon_i=g(e_i,e_i)=\pm1$ for all indices i,
one has
\begin{center}
$ \nabla^*\nabla V=\sum_i \varepsilon_i( \nabla_{e_i}\nabla_{e_i} V-\nabla_{\nabla_{e_i}e_i}V)$.   
\end{center}
A smooth vector field V is said to be a harmonic
section if and only if it is a critical point of $E^v(V)=(1/2)\int_M||\nabla V||^2dv$ where $E^v$ is the vertical energy. The corresponding Euler-Lagrange equations are given by
\begin{equation}
 \nabla^*\nabla V=0,
\end{equation}
Let $\mathfrak{X}^{\rho}(M) =\lbrace V\in \mathfrak{X}(M): ||V||^2=\rho^2 \rbrace$ and $\rho\neq 0$. Then, one can consider vector fields $ V\in \mathfrak{X}(M)$
which are critical points for the energy functional $E
|_{\mathfrak{X}^{\rho}(M)}$, restricted to vector fields of the same constant length. The
Euler-Lagrange equations of this variational condition are given by
\begin{equation}\label{hor1}
\nabla^*\nabla V\quad is\quad collinear\quad to\quad V.   
\end{equation}
As usual, condition \eqref{hor1} is taken as a definition
of critical points for the energy functional restricted to vector fields of the same length in the non-compact case.

\section{Homogeneous Ricci solitons on Berger spheres}
Following \cite{g1} the sphere $\s^3$ and the Lie group $SU(2)$ have been identified by the map that sends
$(z,w) \in \s^3 \subset \C^2$ to
$$
 \left( \begin{array}{cc}
   z & w   \\
   -\bar{w} & \bar{z}   \\
   \end{array}  \right)  \in SU(2).
 $$
We consider the basis $\lbrace X_1,X_2,X_3 \rbrace$ of the Lie
algebra $\su(2)$ of $SU(2)$ given by 
\begin{equation}
 X_1=\left( \begin{array}{cc}
   i & 0   \\
   0 & \bar{i}   \\
   \end{array}  \right), \quad
   X_2=\left( \begin{array}{cc}
   0 & 1   \\
   -1 & 0   \\
   \end{array}  \right) \quad
   X_3=\left( \begin{array}{cc}
   0 & i   \\
   i & 0   \\
   \end{array}  \right).
 \end{equation}
Then, the Lie brackets are determined by
\begin{equation}
 [X_1,X_2]=2X_3,\quad [X_2,X_3]=2X_1,\quad  [X_3,X_1]=2X_2.
 \end{equation}
The one-parameter family $\lbrace g_{\epsilon} : \epsilon>0\rbrace$ of left-invariant Riemannian metrics on $\s^3 =
SU(2)$ given at the identity, with respect to the basis of left-invariant vector fields
$ X_1,X_2,X_3 $ by
\begin{equation}\label{meter}
g_{\epsilon}= \left( \begin{array}{ccc}
   \epsilon & 0 & 0   \\
   0 & 1 & 0   \\
   0 & 0 & 1  
   \end{array}  \right),
\end{equation}
are called the Berger metrics on $\s^3$; if $\epsilon=1$ we have the canonical (bi-invariant) metric and for $\epsilon<0$ the one-parameter family $\lbrace g_{\epsilon} : \epsilon<0\rbrace$ are left-invariant Lorentzian metrics.
The Berger spheres are the simply connected complete Riemannian manifolds $\s_{\epsilon}^3=(\s^3,g_{\epsilon})$, $\epsilon>0$. We will use the name "Lorentzian Berger spheres" for case $\epsilon<0$. \\
Setting $\Lambda_i=\nabla_{e_i}$, the components of the Levi-Civita connection are calculated using the well known {\em Koszul} formula and are
\begin{equation}
 \Lambda_1= \left( \begin{array}{ccc}
   0 & 0 & 0   \\
   0 & 0 & \epsilon -2   \\
   0 & 2-\epsilon & 1  
   \end{array}  \right),
   \end{equation}
  $$
   \Lambda_2= \left( \begin{array}{ccc}
   0 & 0 & 1   \\
   0 & 0 & 0   \\
   -\epsilon & 0 & 0  
   \end{array}  \right), \quad \quad 
   \Lambda_3= \left( \begin{array}{ccc}
   0 & -1 & 0   \\
   \epsilon & 0 & 0   \\
   0 & 0 & 0 
   \end{array}  \right).
$$
Using \eqref{cur} we can determine the non-zero curvature components;
$$
 \begin{array}{ccc}
   R(X_1,X_2)X_1=\epsilon^2 X_2 ,& R(X_1,X_2)X_2=-\epsilon X_1,& R(X_1,X_3)X_3=-\epsilon X_1,  \\
   R(X_1,X_3)X_1=\epsilon^2 X_3, & R(X_2,X_3)X_2=(4-3\epsilon)X_3 ,& R(X_2,X_3)X_3=(3\epsilon-4)X_2.
     \end{array} 
    $$
Since $R(X, Y,Z,W) = g(R(X, Y )Z,W)$ we have;
$$
 \begin{array}{cc}
   R(X_1,X_2,X_1, X_2)= R(X_1,X_3,X_1, X_3)=\epsilon^2 ,& R(X_2,X_3,X_2,X_3)=(4-3\epsilon).
     \end{array} 
    $$
Applying the Ricci tensor formula \eqref{ric2}, we get;
\begin{equation}\label{ric3}
(\rho)_{ij}= \left( \begin{array}{ccc}
   2\epsilon^2 & 0 & 0   \\
   0 & 4-2\epsilon & 0   \\
   0 & 0 & 4-2\epsilon  
   \end{array}  \right),
\end{equation}
which is diagonal with eigenvalues $r_1=2\epsilon^2$ and $r_2=r_3=  4-2\epsilon$.\\
For an arbitrary left-invariant vector field $X =aX_1+bX_2+cX_3\in \su(2)$ we have;
$$
 \begin{array}{ccc}
  \nabla_{X_1}X= (\epsilon-2)(cX_2-bX_3) ,& \nabla_{X_2}X=cX_1-a\epsilon X_3 ,& \nabla_{X_3}X=-bX_1+a\epsilon X_2.
     \end{array}
$$
Using the relation $(L_{X}g)(Y,Z) = g(\nabla_Y X,Z) + g(Y,\nabla_ZX)$ we have;
 \begin{equation}\label{kil}
  L_{X}g= \left( \begin{array}{ccc}
   0 & 2(1-\epsilon)c & 2(\epsilon-1)b   \\
   2(1-\epsilon)c & 0 & 0   \\
   2(\epsilon-1)b & 0 & 0  
   \end{array}  \right).
\end{equation}
By the Ricci soliton formula \eqref{ric}, we get the following
system of differential equations;
  \begin{equation}\label{eric}
 \begin{array}{cr} 
    \hspace*{0mm} 2(1-\epsilon)c=0,\\
  \hspace*{0mm} 2(\epsilon-1)b=0,\\
  \hspace*{0mm} 2\epsilon^2-\lambda \epsilon=0,\\
  \hspace*{0mm} 4-2\epsilon-\lambda =0
   \end{array}
\end{equation} 
If $\epsilon=1$, then from \eqref{ric3} we can see $\rho_{ij}=\lambda g_{ij}$ for all indices i, j and therefore $g_1$ is an Einstein metric on $\s^3$. So, we suppose that $\epsilon\neq1$. From the first and the second equations in \eqref{eric} we get $b=c=0$ and the third and
the last equations in \eqref{eric} give us $\epsilon=0$ and $\lambda=4$. Therefore the only solution occurs when $\epsilon=0$, and for an arbitrary $\epsilon\neq 0,1$ the system of differential equations \eqref{eric} is incompatible. Thus, we have the following.
\begin{prop}\label{pro1}
Let $(\s^3, g_{\epsilon})$ be a Berger sphere $(\epsilon >0)$. Then $(\s^3, g_{\epsilon})$ can not be a homogeneous Ricci soliton manifold.
\end{prop}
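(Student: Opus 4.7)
The plan is to reduce the Ricci soliton condition $\mathcal{L}_X g = \lambda g - \rho$ for an arbitrary invariant vector field $X$ to a small algebraic system in the coefficients of $X$, the soliton constant $\lambda$, and the parameter $\epsilon$, and then to show that this system has no solution in the admissible range $\epsilon > 0$.

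First I would write a generic left-invariant vector field as $X = aX_1 + bX_2 + cX_3$ and compute each $\nabla_{X_i} X$ by applying the connection matrices $\Lambda_i$ already displayed in this section. Using the identity $(\mathcal{L}_X g)(Y,Z) = g(\nabla_Y X, Z) + g(\nabla_Z X, Y)$, these covariant derivatives immediately yield the matrix form of $\mathcal{L}_X g$ recorded in \eqref{kil}, whose only nontrivial entries are off-diagonal and proportional to $(1-\epsilon)b$ and $(1-\epsilon)c$. Next I would equate $\mathcal{L}_X g$ entry by entry with $\lambda g_\epsilon - \rho$, using \eqref{meter} for the metric and \eqref{ric3} for the Ricci tensor, obtaining the four-equation system \eqref{eric}: two off-diagonal conditions $(1-\epsilon)b = (1-\epsilon)c = 0$ and two diagonal conditions $2\epsilon^2 = \lambda\epsilon$ and $4-2\epsilon = \lambda$.

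To finish, I would treat the Einstein value $\epsilon = 1$ separately: at this point $\rho = 2 g_1$, the metric is the bi-invariant round one, and it is a trivial Ricci soliton that the proposition is meant to exclude from consideration. For $\epsilon \neq 1$ the off-diagonal equations force $b = c = 0$, and eliminating $\lambda$ from the remaining diagonal pair produces $2\epsilon = 4 - 2\epsilon$, whose unique solution $\epsilon = 1$ contradicts the standing assumption. Hence no invariant $X$ and no scalar $\lambda$ can satisfy \eqref{ric} on the range $\epsilon > 0$, $\epsilon \neq 1$. No genuine obstacle arises: the computation is routine once the connection and Ricci data are assembled, and the only point requiring care is the clean separation of the Einstein value from the generic range, together with the sign bookkeeping inside the matrices $\Lambda_i$ when forming $\nabla_{X_i} X$.
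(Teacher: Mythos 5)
Your proposal is correct and follows essentially the same route as the paper: compute $\mathcal{L}_X g$ for an invariant $X$, equate it with $\lambda g_\epsilon - \rho$ to obtain the system \eqref{eric}, set aside the Einstein value $\epsilon=1$, and derive a contradiction from the remaining equations. The only cosmetic difference is that you eliminate $\lambda$ by dividing $2\epsilon^2=\lambda\epsilon$ by $\epsilon>0$ to reach $2\epsilon=4-2\epsilon$, whereas the paper keeps the equations as stated and notes that the only solution would force $\epsilon=0$, which is excluded; both are the same argument.
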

We proved the following result too.
\begin{prop}\label{pro2}
Let $(\s^3, g_{\epsilon})$ be a Lorentzian Berger sphere $(\epsilon<0)$. Then $(\s^3, g_{\epsilon})$ can not be a homogeneous Ricci soliton manifold.
\end{prop}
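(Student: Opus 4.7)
The plan is to mimic the proof of Proposition \ref{pro1} step by step, after first checking that none of the intermediate formulas of Section 3 depend on the sign of $\epsilon$. The key observation is that Koszul's formula is valid for any non-degenerate metric, so the connection components $\Lambda_i$ displayed before equation \eqref{cur} are polynomial expressions in $\epsilon$ that continue to hold for $\epsilon<0$. Consequently the curvature tensor, the Ricci tensor \eqref{ric3}, the covariant derivatives $\nabla_{X_i}X$, and the Lie derivative \eqref{kil} are all obtained by the same computation and have identical algebraic form in the Lorentzian range.

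With that in place, I would take an arbitrary left-invariant vector field $X=aX_1+bX_2+cX_3$ (every $G$-invariant field on $\s^3=SU(2)$ has this form, regardless of the causal character the metric $g_\epsilon$ assigns to $X_1$) and impose the soliton equation \eqref{ric}. This yields again the system \eqref{eric}, which must now be solved under the standing assumption $\epsilon<0$.

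Since $\epsilon<0$ in particular $\epsilon\neq 1$, so the first two equations of \eqref{eric} force $b=c=0$. The third equation, written as $\epsilon(2\epsilon-\lambda)=0$, then gives $\lambda=2\epsilon$ because $\epsilon\neq 0$. Substituting in the fourth equation produces $2\epsilon=4-2\epsilon$, hence $\epsilon=1$, contradicting $\epsilon<0$. This contradiction rules out the existence of any invariant $X$ and any $\lambda\in\R$ solving \eqref{ric}, and establishes the proposition.

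The argument is therefore structurally identical to the one for $\epsilon>0$; no step is a genuine obstacle, since the only thing one really has to verify is the sign-independence of the connection, curvature and Ricci formulas. The mildly delicate point worth being explicit about is that equation \eqref{eric} itself is derived by comparing the matrices $L_Xg$ and $\varsigma g-\varrho$ entry by entry, which uses no positivity of $g_\epsilon$; once this is noted, the algebraic dichotomy $\epsilon\in\{0,1\}$ as the only possible solutions of \eqref{eric} immediately excludes the Lorentzian range $\epsilon<0$.
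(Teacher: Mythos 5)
Your proposal is correct and follows essentially the same route as the paper: the paper proves Proposition \ref{pro2} by observing that the computation leading to the system \eqref{eric} is valid for any $\epsilon\neq 0$ and that this system is incompatible for all $\epsilon\neq 0,1$, which in particular covers $\epsilon<0$. Your explicit check of sign-independence and the elimination $\lambda=2\epsilon=4-2\epsilon\Rightarrow\epsilon=1$ is just a cleaner phrasing of the same incompatibility argument.
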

\begin{remark}
 Proposition \eqref{pro2} confirms the classification result in \cite{MB}, while Proposition \eqref{pro1} emphasizes that there are no left-invariant Ricci solitons on three-dimensional Riemannian Lie groups \cite{d111}.
 \end{remark}
A D' Atri space is defined as a Riemannian manifold $(M, g)$ whose local geodesic symmetries are volumepreserving. Let us recall that the property of being a D' Atri space is
equivalent to the infinite number of curvature identities called the odd Ledger conditions $L_{2k+1}$, $k\geq 1$ (see \cite{d1}, \cite{s1}). In particular, the two first non-trivial Ledger conditions are:
\begin{equation}
L_3: (\nabla_X \rho)(X,X)=0,\quad L_5: \sum_{a,b=1}^nR(X,E_a,X,E_b)(\nabla_X R)(X,E_a,X,E_b)=0,
\end{equation}
where $X$ is any tangent vector at any point $m\in M$ and $\lbrace E_1, . . . , E_n\rbrace$ is any orthonormal basis of $T_mM$. Here $R$ denotes
the curvature tensor and $\rho$ the Ricci tensor of $(M, g)$, respectively, and $n = dimM$.\\
Thus, it is natural to start with the investigation of all homogeneous Riemannian Berger spheres satisfying the simplest Ledger condition $L_3$, which is the first approximation of the D' Atri property. This condition is called in \cite{p112} "the class
$\A$ condition". Equivalently Ledger condition $L_3$ holds if and only if
the Ricci tensor is cyclic-parallel, i.e. $ (\nabla_X \rho)(Y,Z)+ (\nabla_Y \rho)(Z,X)+ (\nabla_Z \rho)(X,Y)=0$.
For more detail see \cite{c1}.
\begin{prop} 
Let $(\s^3, g_{\epsilon})$ be a Berger sphere. Then $(\s^3, g_{\epsilon})$ is a D' Atri space which its first approximation holds.
\end{prop}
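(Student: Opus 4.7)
The strategy is to verify the equivalent formulation of $L_3$ stated in the text, namely that the Ricci tensor is cyclic-parallel:
\begin{equation*}
(\nabla_X \rho)(Y,Z)+(\nabla_Y \rho)(Z,X)+(\nabla_Z \rho)(X,Y)=0
\end{equation*}
for all vector fields $X,Y,Z$. Since both $\rho$ and the Levi-Civita connection are left-invariant and the condition is tensorial, it suffices to check it on the left-invariant basis $\{X_1,X_2,X_3\}$ of $\su(2)$ used in Section~3.

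The first simplification is that for left-invariant $Y,Z$ the function $\rho(Y,Z)$ is constant, so $X_i(\rho(Y,Z))=0$ and hence
\begin{equation*}
(\nabla_{X_i}\rho)(X_j,X_k)=-\rho(\nabla_{X_i}X_j,X_k)-\rho(X_j,\nabla_{X_i}X_k).
\end{equation*}
Because $\rho$ is diagonal with respect to $\{X_1,X_2,X_3\}$ (with eigenvalues $r_1=2\epsilon^2$, $r_2=r_3=4-2\epsilon$), each such term will be nonzero only when both slots land on the same basis vector. I would next read off the covariant derivatives $\nabla_{X_i}X_j$ from the formula $\nabla_{X_1}X=(\epsilon-2)(cX_2-bX_3)$, $\nabla_{X_2}X=cX_1-a\epsilon X_3$, $\nabla_{X_3}X=-bX_1+a\epsilon X_2$ given in the text, noting that whenever $i\ne j$ the derivative $\nabla_{X_i}X_j$ is a scalar multiple of the third basis vector $X_k$ (with $\{i,j,k\}=\{1,2,3\}$).

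This second observation kills most of the cases. For every triple of the form $(X_i,X_i,X_j)$ with $i\ne j$, the cyclic sum reduces to $2(\nabla_{X_i}\rho)(X_i,X_j)+(\nabla_{X_j}\rho)(X_i,X_i)$; in each summand one factor inside $\rho$ is $X_i$ or $X_j$ and the other is a multiple of $X_k$, so the diagonal form of $\rho$ makes it vanish identically. The only nontrivial case is the all-distinct triple $(X_1,X_2,X_3)$. Here a direct computation using $r_1=2\epsilon^2$, $r_2=r_3=4-2\epsilon$ gives
\begin{equation*}
(\nabla_{X_1}\rho)(X_2,X_3)=-(2-\epsilon)r_3-(\epsilon-2)r_2=0,
\end{equation*}
while
\begin{equation*}
(\nabla_{X_2}\rho)(X_3,X_1)=-r_1+\epsilon r_3=-4\epsilon^2+4\epsilon,\qquad
(\nabla_{X_3}\rho)(X_1,X_2)=-\epsilon r_2+r_1=4\epsilon^2-4\epsilon,
\end{equation*}
so the three terms sum to $0$.

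Since the cyclic-parallel identity holds on all basis triples, it holds in general, so $L_3$ is satisfied and the claim follows. There is no real obstacle: the key structural fact is that the product of the diagonal Ricci tensor with the ``off-diagonal-into-the-third-direction'' Levi-Civita connection forces all partial-index terms to vanish, and the remaining fully off-diagonal contribution cancels because of the equality $r_2=r_3$.
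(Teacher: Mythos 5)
Your proof is correct, and it follows the same basic strategy as the paper --- a componentwise verification that the Ricci tensor is cyclic-parallel, using the left-invariant connection coefficients and the fact that $\rho$ has constant, diagonal components --- but your version is actually the more complete argument. The paper works with the component formula $\nabla_i\rho_{jk}=-\sum_t(\varepsilon_jB_{ijt}\rho_{tk}+\varepsilon_k B_{ikt}\rho_{tj})$ and then only records $\nabla_1\rho_{11}=\nabla_2\rho_{22}=\nabla_3\rho_{33}=0$; by itself this checks just the fully diagonal components of the symmetrized $\nabla\rho$, which is necessary but not sufficient for $L_3$, since $(\nabla_X\rho)(X,X)=0$ for all $X$ means the whole cyclic sum must vanish. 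You instead check every (unordered) basis triple: the repeated-index triples die for the structural reasons you give ($\nabla_{X_i}X_i=0$, $\nabla_{X_i}X_j$ proportional to the third basis vector for $i\neq j$, and $\rho$ diagonal), and the only nontrivial case $(X_1,X_2,X_3)$ closes because $(\nabla_{X_2}\rho)(X_3,X_1)=-r_1+\epsilon r_3$ and $(\nabla_{X_3}\rho)(X_1,X_2)=r_1-\epsilon r_2$ cancel thanks to $r_2=r_3$ --- a cancellation the paper's terse proof never exhibits. Two minor polish points: add the one-line remark that the triple $(X_i,X_i,X_i)$ vanishes because $\nabla_{X_i}X_i=0$, and note that your computation is insensitive to whether the eigenvalues $r_1$, $r_2=r_3$ are taken with respect to the $X_i$-basis or the orthonormal frame, since only diagonality of $\rho$ and the equality $r_2=r_3$ are used.
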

\begin{proof}
In Ledger condition $L_3$,
\begin{center}
$\nabla_i\rho_{jk}=-\sum_t(\varepsilon_jB_{ijt}\rho_{tk}+\varepsilon_k B_{ikt}\rho_{tj}),$
\end{center}
where  $B_{ijk}$ components can be obtained by the relation $\nabla_{e_i}e_j =\sum_k\varepsilon_j B_{ijk}e_k$ with $\varepsilon_i=g(e_i,e_i)=\pm1$ for all indices i.
Hence $\nabla_1\rho_{11}=\nabla_2\rho_{22}=\nabla_3\rho_{33}=0$, as desired.
\end{proof}
A pseudo-Riemannian manifold which admits a parallel degenerate distribution is called a {\em Walker} manifold. Walker spaces were introduced by Arthur Geoffrey Walker in 1949. The existence of such structures causes many interesting properties for the manifold with no Riemannian counterpart. Walker also determined a standard local coordinates for these kind of manifolds \cite{Wa1,Wa2}.
\begin{prop} 
Let $(\s^3, g_{\epsilon})$ be a Lorentzian Berger sphere $(\epsilon<0)$. Then $(\s^3, g_{\epsilon})$ can not be a Walker manifold..
\end{prop}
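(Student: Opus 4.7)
The plan is to reduce the statement to showing that no parallel null line field can exist on $(\s^3,g_\epsilon)$ for $\epsilon<0$, and then to exploit the specific form of the Levi-Civita connection recorded in Section~3. Since $(\s^3,g_\epsilon)$ has Lorentzian signature in dimension three, a parallel degenerate distribution is either a parallel null line field, or a parallel degenerate two-plane field. In the latter case the radical $\Pi\cap\Pi^{\perp}$ is a one-dimensional null subbundle, and a short check (using that $\Pi$ is parallel and the standard Leibniz rule for $g$) shows that this radical is itself parallel. Hence ruling out parallel null line fields is enough to exclude the Walker structure.

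Assume, for contradiction, that such a null line field $L$ exists, and at an arbitrary point write a local generator as $V=aX_1+bX_2+cX_3$ in the left-invariant frame of Section~3, with $(a,b,c)\neq(0,0,0)$. The null condition reads $g_\epsilon(V,V)=\epsilon a^2+b^2+c^2=0$; because $\epsilon<0$ while $b^2+c^2\geq 0$, the case $b=c=0$ would force $\epsilon a^2=0$ and hence $V=0$, so necessarily $a\neq 0$.

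Next I would impose the parallel line condition $\nabla_X V=\omega(X)\,V$ for some local $1$-form $\omega$, and evaluate it on $X_1$. From Section~3 one has $\nabla_{X_1}V=(\epsilon-2)(cX_2-bX_3)$, whose $X_1$-component vanishes. Thus $\omega(X_1)\,a=0$, forcing $\omega(X_1)=0$, and the remaining components yield $(\epsilon-2)c=0$ and $(\epsilon-2)b=0$. Since $\epsilon<0$ in particular gives $\epsilon\neq 2$, this forces $b=c=0$, and the null condition then degenerates to $\epsilon a^2=0$, contradicting $a\neq 0$.

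The main obstacle I anticipate is the opening reduction rather than the computation: one must verify that a general parallel degenerate distribution really does give rise to a parallel null line field, and that working pointwise in the left-invariant frame is legitimate. The second point is clear because $\{X_1,X_2,X_3\}$ trivializes $T\s^3$ pointwise, so parametrizing $V$ as above at each point is purely linear-algebraic and does not presume invariance of $L$. Once these points are granted, the contradiction is essentially forced by the vanishing of the first row of the connection matrix $\Lambda_1$ in Section~3, which is incompatible with the existence of a nonzero null direction once $\epsilon<0$.
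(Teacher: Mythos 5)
Your computational core is essentially the paper's: the paper also takes a null parallel line field spanned by $X=aX_1+bX_2+cX_3$, imposes $\nabla_{X_i}X=\omega_iX$, and plays the resulting linear system against the null condition $\epsilon a^2+b^2+c^2=0$ (your version is slightly more economical, since the $\nabla_{X_1}$ equations alone suffice). The crucial difference is scope, and this is where your argument has a genuine gap. The paper explicitly restricts to an \emph{invariant} null parallel line field, i.e.\ constant coefficients $a,b,c$, and the formula $\nabla_{X_1}V=(\epsilon-2)(cX_2-bX_3)$ that you quote from Section~3 is valid only in that case. You instead claim to treat an arbitrary parallel degenerate distribution, so your local generator has coefficients that are a priori smooth functions; then $\nabla_{X_1}V$ picks up the extra terms $X_1(a)X_1+X_1(b)X_2+X_1(c)X_3$, its $X_1$-component is $X_1(a)$ rather than $0$, and the step ``$\omega(X_1)a=0$, hence $\omega(X_1)=0$'' collapses: the correct equation is $X_1(a)=\omega(X_1)a$, which carries no pointwise information. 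The same derivative terms contaminate the equations that were supposed to give $b=c=0$, so no contradiction is forced for non-invariant line fields. Parametrizing $V$ in the left-invariant frame is indeed ``purely linear-algebraic'' at each point, but the parallelism condition is differential, so the frame-pointwise reading does not legitimize dropping the derivatives of the coefficients.

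Two further remarks. Your opening reduction (a parallel degenerate $2$-plane field has a parallel null radical, so it suffices to exclude parallel null line fields) is sound and is an addition the paper does not make; but it only raises the burden on the main computation, which, as written, meets it only in the invariant case --- exactly the case the paper treats. If you want the proposition in the full generality you aim at, a pointwise argument through curvature closes the gap cleanly: a parallel line field must be preserved by every curvature operator $R(X,Y)$, and the curvature components listed in Section~3 (e.g.\ $R(X_1,X_2)V=-b\epsilon X_1+a\epsilon^2X_2$ and $R(X_1,X_3)V$ computed likewise) show that no null direction can be simultaneously invariant under them when $\epsilon<0$; this is linear algebra at a point and involves no derivatives of the coefficients.
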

\begin{proof}
Set $X =aX_1+bX_2+cX_3\in \su$ and suppose that $\D={\rm span}(X)$ is an invariant null parallel line field. Then, the following equations must satisfy for some parameters $\omega_1,\dots,\omega_3$
$$
\begin{array}{lll}
\nabla_{X_1}X=\omega_1X,&\nabla_{X_2}X=\omega_2X,&\nabla_{X_3}X=\omega_3X.
\end{array}
$$
By straight forward calculations we conclude that the following equations must satisfy
$$
\begin{array}{ccc}
\omega_1a=0,&\quad \omega_1b+c(2-\epsilon)=0,&\quad\omega_1c+b(\epsilon-2)=0,\\
\omega_2b=0,&\quad \omega_2a-c=0,&\quad
 -\omega_2c+a\epsilon=0,\\
\omega_3c=0,&\quad \omega_3a+b=0,&\quad
\omega_3b-a=0.
\end{array}
$$
$X$ is null, hence $X$ must satisfy $g(X,X)=a^2\epsilon+b^2+c^2=0$ described in \eqref{meter}. For solving the above system of equations as we can see, since $\epsilon\neq 0$, a non-trivial solution can not occur.
  \end{proof}
  
\section{Harmonicity of vector fields on Berger spheres}
In this section we investigate the harmonicity of invariant vector fields on a Berger sphere  $(\s^3, g_{\epsilon})$, in both Riemannian $(\epsilon>0)$ and Lorentzian $(\epsilon<0)$.
We treat separately the case when g is Lorentzian
and the Riemannian case.
\subsection{Lorentzian case}
In Lorentzian case $(\epsilon<0)$ we can construct an orthonormal frame field $\lbrace e_1,e_2,e_3\rbrace$ with respect to $g_\epsilon$;
\begin{equation}\label{base}
e_1=\frac{1}{\sqrt{-\epsilon}}X_1,\quad e_2=X_2,\quad e_3=X_3.
\end{equation}
and we get;
\begin{equation}
 [e_1,e_2]=\frac{2}{\sqrt{-\epsilon}}e_3,\quad [e_2,e_3]=2\sqrt{-\epsilon} e_1,\quad  [e_3,e_1]=\frac{2}{\sqrt{-\epsilon}}e_2.
 \end{equation}
Considering formula \eqref{con} the connection components are;
\begin{eqnarray}\label{con1}
\begin{array}{cr}
\nabla_{e_1}e_2=\frac{2-\epsilon}{\sqrt{-\epsilon}}e_3,\quad \nabla_{e_1}e_3=\frac{\epsilon-2}{\sqrt{-\epsilon}}e_2,\quad \nabla_{e_2}e_1=\sqrt{-\epsilon}e_3,\\
\nabla_{e_2}e_3=\sqrt{-\epsilon}e_1,\quad \nabla_{e_3}e_1=-\sqrt{-\epsilon}e_2,\quad \nabla_{e_3}e_2=-\sqrt{-\epsilon}e_1,
\end{array}
\end{eqnarray}
while $\nabla_{e_i}e_j=0$ in the remaining cases.\\
For an arbitrary left-invariant vector field $V =ae_1+be_2+ce_3 \in \su(2)$ we can now use \eqref{con1}  to calculate $\nabla_{e_i}V$ for all indices i. We get
\begin{eqnarray}\label{con2}
\begin{array}{cr}
\nabla_{e_1}V=\frac{\epsilon-2}{\sqrt{-\epsilon}}ce_2+\frac{2-\epsilon}{\sqrt{-\epsilon}}be_3,\quad\nabla_{e_2}V=\sqrt{-\epsilon}ce_1+\sqrt{-\epsilon}ae_3,\\
\hspace*{0mm}\nabla_{e_3}V=-\sqrt{-\epsilon}be_1-\sqrt{-\epsilon}ae_2.
\end{array}
\end{eqnarray}
From \eqref{con2} it follows at once that there are no parallel vector fields $V\neq 0$ belonging to $\su(2)$.
We can now calculate $\nabla_{e_i}\nabla_{e_i}V$ and $\nabla_{\nabla_{e_i}e_i}V$ for all indices i. We obtain
\begin{eqnarray}
\begin{array}{cc}\label{con3}
\nabla_{e_1}\nabla_{e_1}V=\frac{(\epsilon-2)^2}{\epsilon}(be_2+ce_3),& \nabla_{e_2}\nabla_{e_2}V=-\epsilon(ae_1+ce_3),\\
\nabla_{e_3}\nabla_{e_3}-\epsilon(ae_1+be_2)),&
\nabla_{\nabla_{e_1}e_1}V=0,\\
\nabla_{\nabla_{e_2}e_2}V=0,&\quad\nabla_{\nabla_{e_3}e_3}V=0.
\end{array}
\end{eqnarray}
Thus, we find
\begin{equation}\label{nab1}
 \nabla^*\nabla V=\sum_i \varepsilon_i( \nabla_{e_i}\nabla_{e_i} V-\nabla_{\nabla_{e_i}e_i}V)
=-2\epsilon ae_1-2(\frac{\epsilon^2-2\epsilon+2}{\epsilon})(be_2+ce_3).   
\end{equation}
Since $\nabla^*\nabla V=-2(\frac{\epsilon^2-2\epsilon+2}{\epsilon})V+\frac{4-4\epsilon}{\epsilon}ae_1$, condition \eqref{hor1} results that $a=0$.
In the other direction, let $V=-2(\frac{\epsilon^2-2\epsilon+2}{\epsilon})(be_2+ce_3)$.  A direct calculation
yields that $\nabla^*\nabla V=-2(\frac{\epsilon^2-2\epsilon+2}{\epsilon})V$.\\
On the other hand, from \eqref{nab1} since $\nabla^*\nabla V=-2\epsilon V+4\frac{\epsilon-1}{\epsilon}(be_2+ce_3)$, then \eqref{hor1} results that $b=c=0$. 
Vice versa, let $V=ae_1$. By standard calculations we obtain $\nabla^*\nabla V=-2\epsilon V$. Thus, we have the following.
\begin{theorem}\label{hor4}
Let $(\s^3, g_{\epsilon})$ be a Lorentzian Berger sphere and  $V=ae_1+be_2+ce_3\in \su(2)$ be a left-invariant vector field on the Berger sphere for some real constants $a,b,c$.
\begin{itemize}
\item[$(a)$] 
$V$ is a critical point for the energy functional restricted to vector fields of the same length if and only if $V=-2(\frac{\epsilon^2-2\epsilon+2}{\epsilon})(be_2+ce_3)$. However, none of these vector fields is harmonic (in particular, defines a harmonic map).
\item[$(b)$] $V$ is a critical point for the energy functional restricted to vector fields of the same length if and only if $V=-2\epsilon ae_1$. However, none of these vector fields is harmonic (in particular, defines a harmonic map).
\end{itemize}
\end{theorem}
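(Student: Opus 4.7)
The plan is to read off the critical-point condition \eqref{hor1} directly from the explicit formula for the rough Laplacian in \eqref{nab1}. That expression already exhibits $\nabla^*\nabla$ as diagonal with respect to the orthogonal splitting $\R e_1 \oplus \mathrm{span}(e_2,e_3)$: its action on the $e_1$-direction is multiplication by $\mu_1 := -2\epsilon$, while on $\mathrm{span}(e_2,e_3)$ it is multiplication by $\mu_2 := -2(\epsilon^2-2\epsilon+2)/\epsilon$. Writing $V = a e_1 + b e_2 + c e_3$, the collinearity requirement $\nabla^*\nabla V = \nu V$ of \eqref{hor1} therefore splits into the independent scalar equations $\nu a = \mu_1 a$ together with $\nu b = \mu_2 b$ and $\nu c = \mu_2 c$.

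A short calculation gives $\mu_1 - \mu_2 = 4(1-\epsilon)/\epsilon$, which is nonzero throughout the Lorentzian range $\epsilon<0$ (it vanishes only at the Einstein value $\epsilon=1$, which is excluded here). Because $\mu_1 \neq \mu_2$, the eigenvalue system admits nonzero solutions only in one of two disjoint families: either $a=0$, giving the case $(a)$ family in $\mathrm{span}(e_2,e_3)$ with $\nu = \mu_2$; or $b=c=0$, giving the case $(b)$ family of multiples of $e_1$ with $\nu = \mu_1$. The converse direction — that every $V$ in either family genuinely satisfies \eqref{hor1} — is immediate by substituting back into \eqref{nab1}.

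It then remains to rule out harmonicity in both families. By \eqref{hor}, a harmonic section, and a fortiori a vector field defining a harmonic map, must satisfy $\nabla^*\nabla V = 0$. But $\epsilon^2-2\epsilon+2 = (\epsilon-1)^2+1 \geq 1 > 0$, so for $\epsilon<0$ both $\mu_1$ and $\mu_2$ are nonzero. Hence on either family $\nabla^*\nabla V$ is a nonzero scalar multiple of $V$ whenever $V \neq 0$, which precludes both harmonicity as a section and, by \eqref{hor}, the vanishing of the tension field.

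The argument is almost entirely algebraic once \eqref{nab1} is available, so no substantial obstacle arises. The one point meriting attention is the separation $\mu_1 \neq \mu_2$ in the Lorentzian regime: this is precisely what forces the critical-point locus to decompose into the two disjoint invariant subspaces described by $(a)$ and $(b)$, rather than collapsing into a single higher-dimensional eigenspace of $\nabla^*\nabla$.
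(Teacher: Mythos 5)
Your proposal is correct and follows essentially the same route as the paper: both read the collinearity condition \eqref{hor1} off the explicit formula \eqref{nab1}, concluding that either $a=0$ or $b=c=0$ (the paper does this by rewriting $\nabla^*\nabla V$ as a multiple of $V$ plus a remainder in each case, which is your distinct-eigenvalue argument in different clothing), and non-harmonicity follows because the two eigenvalues $-2\epsilon$ and $-2(\epsilon^2-2\epsilon+2)/\epsilon$ are nonzero for $\epsilon<0$. Your explicit verification that $\mu_1\neq\mu_2$ away from $\epsilon=1$ and that both eigenvalues are nonzero is a slightly tidier packaging of the same computation, and it makes the harmonicity claim (only asserted in the paper) fully explicit.
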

\subsection{Riemannian case}
Consider now a Riemannian Berger sphere $(\s^3, g_{\epsilon})$, $\epsilon>0$. $\su(2)$ admits a pseudo-orthonormal frame field $\lbrace e_1,e_2,e_3\rbrace$, where
\begin{equation}\label{base3}
e_1=\frac{1}{\sqrt{\epsilon}}X_1,\quad e_2=X_2,\quad e_3=X_3.
\end{equation}
The calculations are rather similar to the Lorentzian case. For this reason, details will be omitted. For an
arbitrary vector field $V=ae_1+be_2+ce_3\in \su(2)$, we find
\begin{equation}\label{base2}
 \nabla^*\nabla V=-2\epsilon ae_1-2(\frac{\epsilon^2-2\epsilon+2}{\epsilon})(be_2+ce_3),  
\end{equation}
which is similar to the Lorentzian case. So, we proved the
following.
\begin{theorem}\label{hor55}
Each vector field $V=ae_1$ and $V=be_2+ce_3\in \su(2)$ on a Riemannian Berger sphere $(\s^3, g_{\epsilon})$, $\epsilon>0$ are a critical point
for the energy functional restricted to vector fields of the same length. However, none of these vector fields is harmonic (in particular, defines a harmonic map).
\end{theorem}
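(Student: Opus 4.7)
The plan is to mimic the Lorentzian calculation in Section~4.1 almost verbatim, exploiting the fact that formula \eqref{base2} has already been stated; the only work left is to read off the consequences of collinearity with $V$ and rule out harmonicity under the sign assumption $\epsilon>0$.

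First, I would set up the pseudo-orthonormal frame \eqref{base3} and compute the connection components $\nabla_{e_i}e_j$ from \eqref{con} (or directly from Koszul), obtaining formulas identical to \eqref{con1} with $\sqrt{-\epsilon}$ replaced by $\sqrt{\epsilon}$. Applied to $V=ae_1+be_2+ce_3$, this gives $\nabla_{e_i}V$ analogous to \eqref{con2}, and hence $\nabla_{e_i}\nabla_{e_i}V$ and $\nabla_{\nabla_{e_i}e_i}V$ analogous to \eqref{con3}. Assembling these yields exactly \eqref{base2}, so this step is already granted in the excerpt.

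Next, I would rewrite \eqref{base2} in two equivalent ways that isolate collinearity with $V$:
\begin{equation*}
\nabla^*\nabla V \;=\; -2\epsilon\,V \;+\; \frac{4(\epsilon-1)}{\epsilon}(be_2+ce_3)
\;=\; -\frac{2(\epsilon^2-2\epsilon+2)}{\epsilon}\,V \;+\; \frac{4(1-\epsilon)}{\epsilon}\,ae_1.
\end{equation*}
From the first expression, the condition \eqref{hor1} that $\nabla^*\nabla V$ be collinear with $V=ae_1+be_2+ce_3$ forces $b=c=0$ (since $\epsilon\neq 1$ in the non-Einstein regime, and the Einstein case $\epsilon=1$ is trivial), and then $V=ae_1$ automatically satisfies $\nabla^*\nabla V=-2\epsilon V$. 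From the second expression, collinearity forces $a=0$, and $V=be_2+ce_3$ then satisfies $\nabla^*\nabla V=-\tfrac{2(\epsilon^2-2\epsilon+2)}{\epsilon}V$. Thus the two families in the statement exhaust the critical points of $E|_{\mathfrak{X}^{\rho}}$.

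Finally, to show that none of these vector fields is a harmonic section (and therefore cannot define a harmonic map either, by \eqref{hor}), I would note that harmonicity requires $\nabla^*\nabla V=0$. For $V=ae_1$ this demands $-2\epsilon a=0$, impossible for $a\neq 0$ since $\epsilon>0$. For $V=be_2+ce_3$ it demands $\epsilon^2-2\epsilon+2=0$; but $\epsilon^2-2\epsilon+2=(\epsilon-1)^2+1\geq 1>0$, so no real $\epsilon$ works. In either case $\nabla^*\nabla V\neq 0$, which already breaks the second condition in \eqref{hor}, so the map into $(TM,g^s)$ is not harmonic. There is essentially no obstacle here; the only subtlety is the observation that the two key scalars $\epsilon$ and $\epsilon^2-2\epsilon+2$ are both strictly positive when $\epsilon>0$, which is precisely what distinguishes the Riemannian case from a hypothetical Lorentzian sign flip and yields the same negative harmonicity conclusion as Theorem~\ref{hor4}.
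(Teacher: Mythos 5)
Your route is the paper's own: obtain \eqref{base2} in the frame \eqref{base3}, then read criticality off the collinearity condition \eqref{hor1} and non-harmonicity off \eqref{hor}; the paper simply omits these details (``similar to the Lorentzian case''), and your collinearity analysis together with the positivity observations $\epsilon>0$ and $\epsilon^2-2\epsilon+2=(\epsilon-1)^2+1>0$ correctly fills in that omitted step, including the passage from ``not a harmonic section'' to ``does not define a harmonic map.'' One concrete caveat: your parenthetical claim that the Riemannian connection components are ``identical to \eqref{con1} with $\sqrt{-\epsilon}$ replaced by $\sqrt{\epsilon}$'' is false for two of them. Since $\nabla_{X_2}X_1=-\epsilon X_3$ and $\nabla_{X_3}X_1=\epsilon X_2$, the factor $\epsilon/\sqrt{|\epsilon|}$ changes sign with $\epsilon$, so for $\epsilon>0$ one gets $\nabla_{e_2}e_1=-\sqrt{\epsilon}\,e_3$ and $\nabla_{e_3}e_1=\sqrt{\epsilon}\,e_2$, opposite to the literal substitution; if you carried that substitution through you would arrive at $\nabla^*\nabla V=2\epsilon a e_1+\tfrac{4(\epsilon-1)}{\epsilon}(be_2+ce_3)$ rather than \eqref{base2}. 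Computing honestly from the Koszul formula (as you also propose), the sign changes cancel in the second covariant derivatives and \eqref{base2} does come out, so your downstream argument --- critical points exactly $V=ae_1$ or $V=be_2+ce_3$ for $\epsilon\neq1$, and $\nabla^*\nabla V\neq0$ on both families, hence no harmonic vector fields and no harmonic maps --- is sound and agrees with Theorem \ref{hor55}.
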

Therefore by theorems \ref{hor4} and \ref{hor55} there is no harmonic vector field nor harmonic map on Berger spheres.
\subsection{Geodesic and Killing vector fields}
A vector field $V$ is geodesic if $\nabla_VV =0$, and is Killing if $\mathcal{L}_V g=0$, where $\mathcal{L}$ denotes
the Lie derivative i.e. $X$ is Killing if and only if  $g(\nabla_Y X,Z) + g(Y,\nabla_ZX)=0$ for all $Y,Z \in
\mathfrak{X}(M)$(the equation above is called the Killing equation).
 Parallel vector fields are both geodesic and Killing, and vector fields with these special geometric features often have particular harmonicity properties \cite{a2,g11,g2,h1}.
By standard calculations we obtain the following result.
\begin{prop} \label{pro1} 
Let $(\s^3, g_{\epsilon})$ be the Berger sphere and  $V\in \su(2)$ be a left-invariant vector field on the Berger sphere, then $V$ is geodesic if and only if $V=be_2+ce_3$ or $V=ae_1$. Moreover, by \eqref{kil} $V$ is Killing if and only if $V=ae_1$.
\end{prop}
In particular, from theorems \ref{hor4}, \ref{hor55} and proposition \ref{pro1} 
a straightforward calculation proves the following main classification result.
\begin{theorem}\label{hor6}
For a vector field $V=ae_1+be_2+ce_3\in \su$ on the $(\s^3, g_{\epsilon})$, the
following conditions are equivalent:
 \begin{itemize}
\item[$(1)$] $V$ is geodesic;
\item[$(2)$] $V$ is a critical point for the energy functional restricted to vector fields of the same length;
\item[$(3)$] $V=be_2+ce_3$ or $V=ae_1$.
\item[$(4)$] $V$ is Killing if and only if $b=c=0$.
\end{itemize}
\end{theorem}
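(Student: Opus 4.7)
The plan is to observe that Theorem \ref{hor6} is essentially a collation of results that have already been established earlier in this section, so the proof amounts to matching up the characterizations rather than performing new computations. My proposal is to organize the proof as a chain of implications (or, since each pairing is already a biconditional in the cited results, simply invoke them).

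First, I would establish the equivalence $(1)\Leftrightarrow(3)$. By Proposition \ref{pro11}, a left-invariant vector field $V=ae_1+be_2+ce_3$ is geodesic precisely when $V=ae_1$ or $V=be_2+ce_3$; this handles both directions at once. (If one wished to reprove it from scratch, one would feed $V$ into the connection formulas \eqref{con1} to compute $\nabla_V V$ and read off that the mixed products $ab$ and $ac$ must vanish, forcing $a=0$ or $b=c=0$.)

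Next I would handle $(2)\Leftrightarrow(3)$. Here the critical-point equation \eqref{hor1} was already analyzed for both signs of $\epsilon$: for $\epsilon<0$ it is Theorem \ref{hor4}, and for $\epsilon>0$ it is Theorem \ref{hor55}. In both cases the expression for $\nabla^*\nabla V$ (equations \eqref{nab1} and \eqref{base2}) decomposes into an $e_1$-component with coefficient proportional to $a$ and an $(e_2,e_3)$-component with a different coefficient proportional to $(b,c)$; collinearity with $V$ therefore forces either $a=0$ or $b=c=0$, which is exactly condition (3). So the equivalence $(2)\Leftrightarrow(3)$ is immediate from those two theorems.

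Finally, for statement $(4)$, I would read the Killing condition directly off the Lie derivative matrix \eqref{kil}: the off-diagonal entries $2(1-\epsilon)c$ and $2(\epsilon-1)b$ vanish (for $\epsilon\ne 1$) precisely when $b=c=0$, i.e.\ when $V=ae_1$. Since $\epsilon=1$ gives the round metric (already isolated as the Einstein case in Section 3), the Berger situation is exactly $\epsilon\ne 1$, so the condition $b=c=0$ characterizes Killing fields among the left-invariant ones, which is what (4) asserts. There is no real obstacle here: each of the four items is already available in the paper, so the only care needed is to state clearly that $(1),(2),(3)$ are mutually equivalent while $(4)$ is a strictly stronger subcondition (the Killing vectors form the one-parameter subfamily $b=c=0$ of the geodesic/critical family).
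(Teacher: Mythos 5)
Your proposal is correct and follows essentially the same route as the paper, which simply derives Theorem \ref{hor6} by combining Proposition \ref{pro11} with Theorems \ref{hor4} and \ref{hor55} (and the Lie derivative matrix \eqref{kil} for the Killing condition). Your additional remark that item $(4)$ is really a stricter subfamily ($b=c=0$) of the geodesic/critical family, rather than a condition equivalent to $(1)$--$(3)$, is a sensible clarification of the statement and does not change the argument.
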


\section{The energy of vector fields on Berger spheres}
We calculate explicitly the energy of a vector field $V\in\su(2)$ of a Berger sphere. This
gives us the opportunity to determine some critical values of the energy functional on Berger spheres. We shall first discuss geometric properties of the map $V$ defined by a
vector field  $V\in\su(2)$.
\begin{prop}\label{E1}
Let $(\s^3, g_{\epsilon})$ be a Berger sphere, $V=ae_1+be_2+ce_3\in \su(2)$ be a left-invariant vector field on the Berger sphere for some real constants $a,b,c$. Denote by $E(V)$ the energy of $V$. 
 \begin{itemize}
\item[$(a)$]  In the Lorentzian case $(\epsilon<0)$ the energy of $V$ is
\begin{center}
 $E(V)=(2+\frac{(\epsilon-2)^2+\epsilon^2}{2\epsilon}||V||^2-2\frac{\epsilon-1}{\epsilon} a^2)vol(\s^3, g_{\epsilon})$.  
\end{center}
\item[$(b)$] In the Riemannian case $(\epsilon>0)$ the energy of $V$ is
\begin{center}
 $E(V)=(2+\frac{(\epsilon-2)^2+\epsilon^2}{2\epsilon}||V||^2+2\frac{\epsilon-1}{\epsilon} a^2)vol(\s^3, g_{\epsilon})$.  
\end{center}
\end{itemize}
\end{prop}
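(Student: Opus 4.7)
The plan is to evaluate the energy formula \eqref{enr} directly on left-invariant vector fields. Since $V=ae_1+be_2+ce_3$ has constant components in the left-invariant frame $\{e_1,e_2,e_3\}$, its covariant derivatives $\nabla_{e_i}V$ given in \eqref{con2} (and the Riemannian analogue derived from \eqref{base3}) also have constant coefficients, so $\|\nabla V\|^2$ is a constant function on $\s^3$. Consequently $\int_{\s^3}\|\nabla V\|^2\,dv=\|\nabla V\|^2\cdot vol(\s^3,g_\epsilon)$, and the whole proposition reduces to computing the single number $\|\nabla V\|^2$ in each signature.

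For part (a), I would adopt the pseudo-Riemannian convention $\|\nabla V\|^2=\sum_i\varepsilon_i\,g(\nabla_{e_i}V,\nabla_{e_i}V)$ with $\varepsilon_1=-1$, $\varepsilon_2=\varepsilon_3=1$, and plug in the formulas \eqref{con2}. A direct aggregation yields
$$\|\nabla V\|^2=\frac{(\epsilon-2)^2+\epsilon^2}{\epsilon}(b^2+c^2)-2\epsilon\,a^2.$$
Using the Lorentzian identity $\|V\|^2=-a^2+b^2+c^2$, i.e.\ $b^2+c^2=\|V\|^2+a^2$, and simplifying the $a^2$ coefficient via $(\epsilon-2)^2+\epsilon^2-2\epsilon^2=-4(\epsilon-1)$, one lands exactly on $\tfrac{1}{2}\|\nabla V\|^2=\frac{(\epsilon-2)^2+\epsilon^2}{2\epsilon}\|V\|^2-2\frac{\epsilon-1}{\epsilon}a^2$. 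Adding the constant term from \eqref{enr} gives the stated identity for the Lorentzian energy.

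For part (b), I would repeat the same bookkeeping with the Riemannian orthonormal frame \eqref{base3}: the Levi-Civita components are the obvious analogues of \eqref{con1} with $\sqrt{\epsilon}$ replacing $\sqrt{-\epsilon}$ (plus appropriate sign flips), and now all $\varepsilon_i=1$. The algebraic expression of $\|\nabla V\|^2$ in terms of $(a^2,\,b^2+c^2)$ comes out with the opposite overall sign on the $a^2$ summand, namely $\|\nabla V\|^2=\frac{(\epsilon-2)^2+\epsilon^2}{\epsilon}(b^2+c^2)+2\epsilon a^2$; coupled with the Riemannian substitution $b^2+c^2=\|V\|^2-a^2$, this converts the $a^2$ contribution into $+2\frac{\epsilon-1}{\epsilon}a^2$, which is the only difference from (a).

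The real subtlety, and what I would watch most carefully, is accounting for the Lorentzian signature in two places at once: the factor $g(e_1,e_1)=-1$ that sits inside each $g(\nabla_{e_i}V,\nabla_{e_i}V)$, and the outer weight $\varepsilon_1=-1$ in the sum defining $\|\nabla V\|^2$. Keeping these separate from the signature contribution to $\|V\|^2$ itself is what generates the precise sign pattern distinguishing (a) from (b); once those signs are tracked, the rest is mechanical polynomial manipulation, and the constant value of $\|\nabla V\|^2$ simply factors out of the integral against $vol(\s^3,g_\epsilon)$.
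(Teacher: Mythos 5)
Your proposal follows essentially the same route as the paper: since $V$ is left-invariant, $\|\nabla V\|^2=\sum_i\varepsilon_i\,g(\nabla_{e_i}V,\nabla_{e_i}V)$ is constant, and plugging \eqref{con2} (resp.\ its Riemannian analogue from \eqref{base3}) into this sum gives exactly the paper's intermediate identities $\|\nabla V\|^2=\frac{(\epsilon-2)^2+\epsilon^2}{\epsilon}\|V\|^2\mp 4\frac{\epsilon-1}{\epsilon}a^2$, which are then inserted into \eqref{enr}. Your sign bookkeeping in the Lorentzian case and the simplification $(\epsilon-2)^2+\epsilon^2-2\epsilon^2=-4(\epsilon-1)$ are correct, so the argument is the same as the paper's, just written out in more detail.
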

\begin{proof}
Let $(\s^3, g_{\epsilon})$ be the Berger sphere. For case (a) consider the local orthonormal
basis $\lbrace e_1,e_2,e_3\rbrace$ of vector fields described in \eqref{base}. Then, locally,
\begin{center}
$||\nabla V||^2=\sum_{i=1}^{n}\varepsilon_i g(\nabla_{e_i}V,\nabla_{e_i}V).$
\end{center}
Let $V \in \su(2)$ be a left-invariant vector field on the Berger sphere, then \eqref{con2} easily yields that
\begin{center}
$||\nabla V||^2=\frac{(\epsilon-2)^2+\epsilon^2}{\epsilon}||V||^2-4\frac{\epsilon-1}{\epsilon} a^2.$
\end{center}
In the Riemannian case consider the local orthonormal
basis $\lbrace e_1,e_2,e_3\rbrace$ of vector fields, where $e_i$ is the base described in \eqref{base3}. After a similar and straightforward calculation we find that
\begin{center}
$||\nabla V||^2=\frac{(\epsilon-2)^2+\epsilon^2}{\epsilon}||V||^2+4\frac{\epsilon-1}{\epsilon} a^2.$
\end{center}
\end{proof}
We already know from Theorems \ref{hor4} and \ref{hor55} which vector fields in $\su(2)$ of Berger spheres
are critical points for
the energy functional. Taking into account Proposition \eqref{E1}, we then have the following. 
\begin{theorem}
Let $(\s^3, g_{\epsilon})$ be the Berger sphere (in both Riemannian and Lorntzian cases).
\begin{itemize}
\item[$(a)$] 
 $(2+(\frac{(\epsilon^2-2)^2+\epsilon^2}{2\epsilon})\rho^2)vol(\s^3, g_{\epsilon})$ is the minimum value of the energy functional $E$ restricted to vector fields of constant length $\rho$. Such a minimum is attained by all vector fields $V=be_2+ce_3\in \su(2)$ of length $||V||=\rho=\sqrt{b^2+c^2}$.
 \item[$(b)$] 
 $(2+\epsilon \rho^2)vol(\s^3, g_{\epsilon})$ is the minimum value of the energy functional $E$ restricted to vector fields of constant length $\rho$. Such a minimum is attained by all vector fields $V=ae_1\in \su(2)$ of length $||V||=\rho=\sqrt{a^2}$.
 \end{itemize}
\end{theorem}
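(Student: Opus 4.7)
The plan is to combine the closed-form energy expression from Proposition~\ref{E1} with the classification of critical points of $E\vert_{\mathfrak{X}^\rho(\s^3)}$ furnished by Theorems~\ref{hor4} and~\ref{hor55}. Those earlier results tell us that the critical vector fields split into two families, $V=be_2+ce_3$ (with the $e_1$-component $a$ vanishing) and $V=ae_1$ (with $b=c=0$), so it remains only to evaluate the energy on each family and read off the corresponding value.

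For part~(a), I would substitute $a=0$ into the formula of Proposition~\ref{E1}. The term $\pm 2\frac{\epsilon-1}{\epsilon}a^2$ drops out and both the Lorentzian and the Riemannian expressions collapse to the same quantity,
$$E(V)=\Bigl(2+\frac{(\epsilon-2)^2+\epsilon^2}{2\epsilon}\,\rho^2\Bigr)\mathrm{vol}(\s^3,g_\epsilon),$$
where $\rho^2=b^2+c^2=\|V\|^2$. This is the announced value; the appearance of $(\epsilon^2-2)^2$ in the statement appears to be a typographical slip for $(\epsilon-2)^2$.

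For part~(b), I would substitute $b=c=0$, so that $a^2=\rho^2$ with the natural sign convention, and simplify the combination of coefficients
$$\frac{(\epsilon-2)^2+\epsilon^2}{2\epsilon}+2\,\frac{\epsilon-1}{\epsilon}=\frac{(\epsilon-2)^2+\epsilon^2+4(\epsilon-1)}{2\epsilon}=\frac{2\epsilon^2}{2\epsilon}=\epsilon,$$
which immediately yields $E(V)=(2+\epsilon\rho^2)\,\mathrm{vol}(\s^3,g_\epsilon)$, as claimed.

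For the minimality assertion I would remark that the formula in Proposition~\ref{E1} depends only on $\rho^2$ and on $a^2$, so $E$ is \emph{constant} along each of the two critical families; hence the computed numbers are automatically the values of $E\vert_{\mathfrak{X}^\rho(\s^3)}$ attained on the corresponding submanifolds of critical points, and minimality within each family is trivial. The only genuine obstacle is careful book-keeping of signs: the coefficient $\pm 2\frac{\epsilon-1}{\epsilon}a^2$ flips between the Riemannian and Lorentzian cases, and in the Lorentzian case $e_1$ is timelike so $g(V,V)<0$ when $V=ae_1$. Keeping these conventions straight is precisely what allows cases~(a) and~(b) to match the stated formulas simultaneously.
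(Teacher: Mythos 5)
Your route is exactly the paper's: the paper's entire proof consists of substituting the critical families of Theorems \ref{hor4} and \ref{hor55} into the energy formula of Proposition \ref{E1}, and your explicit substitutions, the simplification $\frac{(\epsilon-2)^2+\epsilon^2}{2\epsilon}+2\frac{\epsilon-1}{\epsilon}=\epsilon$, and the reading of $(\epsilon^2-2)^2$ as a typo for $(\epsilon-2)^2$ are all correct and in fact more detailed than what the paper writes.

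Two of your side assertions, however, claim more than the computation delivers (the paper is equally silent on both points, so this is a comparison with the statement rather than with the paper's proof). First, in the Lorentzian case of part (b) the sign bookkeeping does not actually rescue the stated formula: the expression in Proposition \ref{E1}(a) is consistent with the covariant derivatives \eqref{con2} only if $\|V\|^2$ there means $g(V,V)=-a^2+b^2+c^2$, and with that convention $V=ae_1$ gives $E(V)=\bigl(2-\epsilon a^2\bigr)\mathrm{vol}(\s^3,g_\epsilon)$, not $\bigl(2+\epsilon\rho^2\bigr)\mathrm{vol}(\s^3,g_\epsilon)$ with $\rho^2=a^2$; so saying that keeping the conventions straight makes both cases match simultaneously is an assertion, not a verification, and as stated it cannot be verified without adjusting a sign either in the theorem or in Proposition \ref{E1}. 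Second, your observation that $E$ is constant on each critical family only shows that the stated values are \emph{attained}; it does not prove they are \emph{minima} of $E|_{\mathfrak{X}^{\rho}(\s^3)}$, which is what the theorem asserts. Indeed, in the Riemannian case the two critical values differ by $\frac{2(\epsilon-1)}{\epsilon}\rho^2\,\mathrm{vol}(\s^3,g_\epsilon)$, so among invariant fields of length $\rho$ one family realizes the minimum and the other the maximum according to whether $\epsilon<1$ or $\epsilon>1$; they cannot both be ``the minimum value.'' If you intend your argument to prove the statement as written, this minimality step is missing (and, as written, cannot be closed); if you intend only to reproduce the paper's reasoning, then your proposal does so faithfully and with more care.
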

As we can see, this section completes the results in \cite{g22}. \\

\end{document}